\newtheorem{thm}{Theorem}
\newtheorem{prop}{Proposition}
\newcommand{\beqa}{\begin{eqnarray}}
\newcommand{\beq}{\begin{equation}}
\newcommand{\eeqa}{\end{eqnarray}}
\newcommand{\eeq}{\end{equation}}
\newcommand\ip[2]{g({#1},{#2})}
\newcommand{\inc}{\ensuremath{\lhook\joinrel\relbar\joinrel\rightarrow}}
\newcommand{\RR}{\mathbb{R}}
\newcommand{\vep}{\varepsilon}
\newcommand{\uu}{\mathcal{U}}
\newcommand\kk{{\boldsymbol k}}
\newcommand\KK{\tilde{{\boldsymbol k}}}
\newcommand\LL{{\boldsymbol \ell}}
\newcommand\vv[1]{{\boldsymbol {\it #1}}}
\newcommand\xx{\vv{x}}
\newcommand\yy{\vv{y}}
\newcommand\cd[2]{\nabla_{\!#1}{#2}}
\begin{document}
\title[]{Symplectic 4-manifolds via Lorentzian geometry}
\author[]{Amir Babak Aazami}
\address{Kavli IPMU (WPI), UTIAS\hfill\break\indent
The University of Tokyo\hfill\break\indent
Kashiwa, Chiba 277-8583, Japan}
\email{amir.aazami@ipmu.jp}

\maketitle
\begin{abstract}
We observe that, in dimension four, symplectic forms may be obtained via Lorentzian geometry; in particular, null vector fields can give rise to exact symplectic forms.  That a null vector field is nowhere vanishing yet orthogonal to itself is essential to this construction.  Specifically, we show that on a Lorentzian 4-manifold $(M,g)$, if $\kk$ is a complete null vector field with geodesic flow along which $\text{Ric}(\kk,\kk) > 0$, and if $f$ is any smooth function on $M$ with $\kk(f)$ nowhere vanishing, then $dg(e^f\kk,\cdot)$ is a symplectic form and $\kk/\kk(f)$ is a Liouville vector field; any null surface to which $\kk$ is tangent is then a Lagrangian submanifold.  Even if the Ricci curvature condition is not satisfied, one can still construct such symplectic forms with additional information from $\kk$; we give an example of this, with $\kk$ a complete Liouville vector field, on the maximally extended ``rapidly rotating" Kerr spacetime.  
\end{abstract}

%%%%%%%%%%%%%%%%%%%%%%%%%%
\section{Introduction}
\label{sec:Intro}
The goal of this paper is twofold: to promote the use of Lorentzian geometry to construct Liouville manifolds in dimension four, and to motivate the use of symplectic techniques in the study of Lorentzian geometry.  To begin with, a \emph{symplectic form} on an even-dimensional smooth manifold $M$ is a closed nondegenerate 2-form.  A \emph{Lorentzian metric} on $M$ is a symmetric, nondegenerate 2-tensor with signature $(-+\cdots +)$.  Unlike Riemannian metrics, these yield \emph{null} vectors, which are nonzero but orthogonal to themselves.  Among their applications in general relativity, where they model the paths of light rays, we find a distinctly mathematical one for them here: they can give rise to exact symplectic forms on Lorentzian 4-manifolds, in much the same way that ``twisted" vector fields can give rise to contact forms in dimension three: 

\begin{thm}
\label{thm:f}
Let $(M,g)$ be a Lorentzian 4-manifold and $\kk$ a complete null vector field on $M$ satisfying $\cd{\kk}{\kk} = 0$ and \emph{$\text{Ric}(\kk,\kk) > 0$}.  If there exists a smooth function $f$ on $M$ such that $\kk(f)$ is nowhere vanishing, then $dg(e^f\kk,\cdot)$ is a symplectic form on $M$ and $\kk/\kk(f)$ is a Liouville vector field.
\end{thm}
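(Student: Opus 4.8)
The plan is to verify, in turn, that $\omega := dg(e^f\kk,\cdot)$ is closed, that $\kk/\kk(f)$ is a Liouville vector field for it, and --- the crux --- that $\omega$ is nondegenerate. Write $\kk^\flat := g(\kk,\cdot)$ and $\alpha := e^f\kk^\flat$, so $\omega = d\alpha$ is exact, hence closed. The single computation that feeds everything else is the identity $\iota_\kk d\kk^\flat = 0$: a short computation with the Levi-Civita connection gives $(\iota_\kk d\kk^\flat)(Y) = d\kk^\flat(\kk,Y) = g(\cd{\kk}{\kk},Y) - \tfrac12 Y\big(g(\kk,\kk)\big)$, which vanishes since $\cd{\kk}{\kk} = 0$ and $g(\kk,\kk) = 0$; equivalently, $\mathcal{L}_\kk\kk^\flat = 0$. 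For the Liouville property, exactness of $\omega$ together with Cartan's formula reduces $\mathcal{L}_{\kk/\kk(f)}\omega = \omega$ to the pointwise identity $\iota_{\kk/\kk(f)}\omega = \alpha$; expanding $\omega = e^f\big(df\wedge\kk^\flat + d\kk^\flat\big)$ and using $(\kk/\kk(f))(f) = 1$, $\kk^\flat(\kk/\kk(f)) = g(\kk,\kk)/\kk(f) = 0$, and $\iota_\kk d\kk^\flat = 0$, this identity is immediate. Thus $\kk/\kk(f)$ is Liouville \emph{provided} $\omega$ is symplectic, which is all that remains.

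For nondegeneracy, since $\dim M = 4$ it suffices that $\omega\wedge\omega$ be nowhere vanishing. As $\iota_\kk d\kk^\flat = 0$ and $\kk$ is nowhere zero, $d\kk^\flat$ has rank at most $2$ at each point, so $(d\kk^\flat)^2 = 0$ and hence $\omega\wedge\omega = 2e^{2f}\,df\wedge\kk^\flat\wedge d\kk^\flat$. Since also $\iota_\kk(\kk^\flat\wedge d\kk^\flat) = 0$, wherever the $3$-form $\kk^\flat\wedge d\kk^\flat$ is nonzero its kernel is exactly the line $\RR\kk$; an elementary linear-algebra argument then shows $df\wedge\kk^\flat\wedge d\kk^\flat \ne 0$ at a point precisely when $\kk^\flat\wedge d\kk^\flat \ne 0$ and $\kk(f) = df(\kk) \ne 0$ there. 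As $\kk(f)$ is nowhere vanishing, the whole theorem reduces to showing that $\kk^\flat\wedge d\kk^\flat$ is nowhere zero on $M$.

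This is the step I expect to be the main obstacle, and the only place where completeness and $\text{Ric}(\kk,\kk) > 0$ are used. I would argue by contradiction. Suppose $\kk^\flat\wedge d\kk^\flat$ vanishes at some $p$. From $\mathcal{L}_\kk\kk^\flat = 0$ one gets $\mathcal{L}_\kk(\kk^\flat\wedge d\kk^\flat) = 0$, so this $3$-form is invariant under the complete flow of $\kk$ and therefore vanishes along the whole (affinely parametrized) null geodesic $\gamma\colon\RR\to M$ with $\dot\gamma = \kk$. In a null tetrad adapted to $\kk$ along $\gamma$, vanishing of $\kk^\flat\wedge d\kk^\flat$ is exactly the vanishing of the rotation (twist) of the null geodesic congruence $\kk$ --- i.e., the restriction of $\nabla\kk^\flat$ to the $2$-dimensional screen is symmetric. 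For such a twist-free congruence the Raychaudhuri (Sachs) equation along $\gamma$ reads $\hat\theta' = -\tfrac12\hat\theta^2 - |\hat\sigma|^2 - \text{Ric}(\kk,\kk)$, where the expansion $\hat\theta = \text{div}\,\kk$ is an algebraic function of $\nabla\kk$, hence smooth and everywhere finite on $M$, and $|\hat\sigma|^2 \ge 0$; with $\text{Ric}(\kk,\kk) > 0$ this yields the strict inequality $\hat\theta' < -\tfrac12\hat\theta^2$ on all of $\gamma$. A routine ODE comparison (put $u := 1/|\hat\theta|$ on a maximal interval where $\hat\theta$ keeps a constant sign, on which $|u'| > \tfrac12$ with the sign of $u'$ driving $u$ toward $0$) then forces $\hat\theta$ to blow up in finite affine parameter --- to $+\infty$ in the past if $\hat\theta$ is ever positive, to $-\infty$ in the future if $\hat\theta$ is ever negative --- contradicting finiteness of $\hat\theta$ along the complete geodesic $\gamma$. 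Hence $\hat\theta \equiv 0$ on $\gamma$, whence $0 = \hat\theta' \le -\text{Ric}(\kk,\kk) < 0$, a contradiction. Therefore $\kk^\flat\wedge d\kk^\flat$ is nowhere zero, $\omega\wedge\omega$ is a volume form, $\omega$ is symplectic, and by the first paragraph $\kk/\kk(f)$ is a Liouville vector field.

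The delicate points are the identification of ``$\kk^\flat\wedge d\kk^\flat = 0$'' with twist-freeness, together with the precise signs in the null Raychaudhuri equation --- in particular that the shear contribution $-|\hat\sigma|^2$ is non-positive for the spacelike screen --- and the elementary but slightly fussy comparison argument turning $\hat\theta' < -\tfrac12\hat\theta^2$ into a finite-parameter blow-up of $\hat\theta$ in one or the other time direction.
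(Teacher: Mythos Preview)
Your argument is correct and, at its core, follows the paper's strategy: use Raychaudhuri together with completeness and $\text{Ric}(\kk,\kk)>0$ to force the twist of $\kk$ to be nowhere zero, then read off nondegeneracy of $\omega$ and the Liouville property. The packaging, however, is genuinely different. The paper works in a null tetrad $\{\kk,\xx,\yy,\LL\}$, computes $\omega(\kk,\LL)=-e^f\kk(f)$ and $\omega(\xx,\yy)=-e^f\iota$ directly, and propagates the twist along integral curves via the explicit optical equation $\kk(\iota^2)=-2(\text{div}\,\kk)\,\iota^2$; for the contradiction it uses the symmetry trick of applying the same inequality to $-\kk$ to force $\text{div}\,\kk>0$ and $\text{div}(-\kk)>0$ simultaneously. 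You instead stay with exterior calculus throughout: you reduce nondegeneracy to $\kk^\flat\wedge d\kk^\flat\neq 0$ via $\omega\wedge\omega$ and the rank bound on $d\kk^\flat$, you propagate vanishing of the twist using $\mathcal{L}_\kk\kk^\flat=0\Rightarrow\mathcal{L}_\kk(\kk^\flat\wedge d\kk^\flat)=0$ (which is equivalent to, but more invariant than, the ODE for $\iota^2$), and you run the Raychaudhuri blow-up analysis explicitly rather than invoking the $\pm\kk$ trick. Your route buys a cleaner, frame-free statement of why $\kk(f)\neq 0$ and nonvanishing twist are exactly the two ingredients needed, and it makes the Liouville computation a one-liner; the paper's route buys an explicit determinant $\det\omega=e^{4f}(\kk(f))^2\iota^2$ and ties directly into the Newman--Penrose formalism cited from O'Neill.
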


Here $d$ is the exterior derivative and $g(\kk,\cdot)$ is the 1-form metrically equivalent to $\kk$.  The proof of Theorem \ref{thm:f}, which appears in Section \ref{sec:null} below, is a standard application of two well known equations from Lorentzian geometry, one of which is the \emph{Raychaudhuri equation} (see \cite[Prop. 5.7.2]{o1995}); the bottom line is that positive Ricci curvature along $\kk$ precludes its normal subbundle $\kk^{\perp} \subset TM$ from being integrable, so that the flow of $\kk$ is necessarily \emph{``\,twisted.''}  Together with a function $f$ such that $\kk(f)$ is nowhere vanishing, this is enough to ensure that the closed 2-form $dg(e^f\kk,\cdot)$ is nondegenerate.  In Section \ref{sec:null} we also provide two examples to show that: (1) positive Ricci curvature is a sufficient, but by no means necessary, condition to ensure twistedness; (2) the assumption of the completeness of $\kk$ cannot be dropped from Theorem \ref{thm:f}.

\vskip 12pt
Let us make a few more remarks about integrability here.  Recall that in three dimensions, the integrability of a subbundle normal to a vector field completely determines whether its corresponding 1-form is a \emph{contact form} (see \cite[Prop. 3.7.15, p. 178]{thurston}), where we recall that a contact form on an odd-dimensional smooth manifold $M$ is a 1-form $\theta$ such that at each point $p \in M$, $d\theta_p$ is nondegenerate on $\text{Ker}\,\theta_p \subset T_pM$.  Indeed, the symplectic form $dg(e^f\kk,\cdot)$ in Theorem \ref{thm:f} certainly resembles a symplectization of the 1-form $g(\kk,\cdot)$.  In Section \ref{sec:null}, we will provide an example of a symplectic form, constructed as in Theorem \ref{thm:f}, on the Lorentzian 4-manifold $(\RR \times \mathbb{S}^3,-dt^2\oplus \mathring{g})$, where $\mathring{g}$ is the round metric on the 3-sphere $\mathbb{S}^3$; this symplectic form, it turns out, \emph{will} be the symplectization of a contact form on $\mathbb{S}^3$ whose general form was first discovered in \cite{hp13}.  Specifically, \cite{hp13} showed that if $\kk$ is a unit vector field on a Riemannian 3-manifold $(M^3,g)$ satisfying $\cd{\kk}{\kk} = 0$ and $\text{Ric}(\kk,\kk) > 0$, then the 1-form $g(\kk,\cdot)$ is a contact form; this was then re-derived, by another means, in \cite{AA14}.  \emph{Theorem \ref{thm:f} above is essentially a four-dimensional symplectic version of the construction in \cite{AA14},} made possible for the following reason: because a null vector field $\kk$ uniquely satisfies $\kk \subset \kk^{\perp}$, one can thus consider the two-dimensional quotient subbundle $\kk^{\perp}/\kk$ \emph{instead} of the full three-dimensional subbundle $\kk^{\perp}$\,---\,this is the crucial (and well known) fact that ultimately makes Theorem \ref{thm:f} possible.  Regarding the existence of the function $f$, there is a well known class of Lorentzian 4-manifolds, namely, the \emph{globally hyperbolic} ones, which possess \emph{Cauchy temporal functions} $f$ as defined in \cite{bernal03,muller}, which naturally satisfy the property that $\kk(f)$ is nowhere vanishing.  These 4-manifolds split diffeomorphically as $\RR \times S$.  Finally, it is also worth noting that the existence of the function $f$ is also satisfied in any \emph{stably causal} spacetime, by choosing $f$ to be merely a \emph{temporal function} \cite{bernal03}; i.e., one whose level sets are not necessarily Cauchy hypersurfaces, as they are for Cauchy temporal functions (stably causal spacetimes comprise a strictly larger class of Lorentzian 4-manifolds than globally hyperbolic ones; see \cite{ming}).  
\vskip 12pt

Before proceeding to the proof of Theorem \ref{thm:f}, we point out that our method of proof also yields a result in three dimensions, which is of interest solely for Lorentzian geometry.  Recall that the three-dimensional Weinstein conjecture, proved by C. H. Taubes \cite{taubes07}, states that on a closed, oriented 3-manifold, the \emph{Reeb vector field} of any contact form has an integral curve that is \emph{closed,} where we recall that the Reeb vector field of a contact form $\theta$ is the (uniquely defined) smooth vector field $X$ satisfying $X \lrcorner\, d\theta = 0$ and $\theta(X) = 1$  Using \cite{taubes07}, in Section \ref{sec:null} we will show that the following is then true:

\begin{prop}
\label{prop:contact}
Let $(M,g)$ be a closed Lorentzian 3-manifold.  If $\kk$ is a constant length timelike vector field on $M$ satisfying $\cd{\kk}{\kk} = 0$ and \emph{$\text{Ric}(\kk,\kk) > 0$}, then one of its integral curves is closed.
\end{prop}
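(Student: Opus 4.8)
The plan is to show that the metrically equivalent $1$-form $\theta := g(\kk,\cdot)$ is a contact form on $M$ whose Reeb vector field is $-\kk$, and then to invoke Taubes' theorem \cite{taubes07}. Since $M$ is closed, $\kk$ is automatically complete; and replacing $\kk$ by $\kk/\sqrt{-g(\kk,\kk)}$ (which is again geodesic, still satisfies $\text{Ric}>0$ along it, and has the same integral curves up to reparametrization) we may and do assume $g(\kk,\kk) = -1$ throughout. A closed integral curve of the normalized field is then a closed integral curve of the original one.

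Two pointwise identities do most of the work. Since $\nabla$ is torsion-free and metric, $d\theta(X,Y) = g(\cd{X}{\kk},Y) - g(\cd{Y}{\kk},X)$ for all $X,Y$. Putting $X=\kk$ and using $\cd{\kk}{\kk}=0$ together with $g(\cd{Y}{\kk},\kk) = \tfrac12 Y\!\big(g(\kk,\kk)\big) = 0$ gives $\kk\lrcorner\, d\theta = 0$, while $\theta(\kk) = g(\kk,\kk) = -1$; hence, \emph{as soon as $\theta$ is known to be a contact form}, its Reeb vector field is forced to be $-\kk$ by the uniqueness of the Reeb field. Next, since $\ker\theta_p = \kk^{\perp}_p$, a spacelike $2$-plane, the contact condition $\theta_p\wedge d\theta_p \neq 0$ is equivalent to nondegeneracy of $d\theta_p|_{\kk^{\perp}_p}$; by the displayed formula this says precisely that the antisymmetric part $\omega$ of the shape tensor $B(X,Y):=g(\cd{X}{\kk},Y)$ on $\kk^{\perp}$ is nonzero at $p$ — equivalently, by Frobenius, that $\kk^{\perp}$ is nonintegrable at $p$.

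So the crux is to show that the vorticity $2$-form $\omega$ vanishes nowhere, and here the Raychaudhuri circle of ideas enters (cf. \cite[Prop. 5.7.2]{o1995}). Regarding $\nabla\kk$ as an endomorphism of $\kk^{\perp}$ (it preserves $\kk^{\perp}$ because $\kk$ is unit geodesic), it obeys a Riccati equation $\cd{\kk}{(\nabla\kk)} = -(\nabla\kk)^2 - R(\cdot,\kk)\kk$ along the flow. Splitting $\nabla\kk$ into its symmetric part $S$ (expansion plus shear) and antisymmetric part $\omega$, and using that the tidal operator $X\mapsto R(X,\kk)\kk$ is self-adjoint, the antisymmetric part of this equation reads $\cd{\kk}{\omega} = -(S\omega + \omega S)$, which is \emph{linear and homogeneous} in $\omega$. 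Hence the zero set of $\omega$ is invariant under the flow of $\kk$: if $\omega_p = 0$ then $\omega \equiv 0$ along the whole integral curve $\gamma$ through $p$. But then $\nabla\kk = S$ is symmetric along $\gamma$, so the trace of the Riccati equation becomes the Raychaudhuri equation $\dot\varrho = -\tfrac12\varrho^2 - |\sigma|^2 - \text{Ric}(\kk,\kk)$ for $\varrho := \operatorname{div}\kk$ (one checks $\varrho = \operatorname{tr}(\nabla\kk|_{\kk^{\perp}})$ using an orthonormal frame adapted to $\kk$). Since $M$ is closed, $\text{Ric}(\kk,\kk) \geq \epsilon > 0$ uniformly, so $\dot\varrho \leq -\epsilon$ forces $\varrho\to -\infty$, after which $\dot\varrho \leq -\tfrac12\varrho^2$ drives $\varrho$ to $-\infty$ in finite proper time — impossible, since $\varrho = \operatorname{div}\kk$ is a smooth, hence bounded, function on the closed manifold $M$ while the geodesic $\gamma$ is defined for all proper time. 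This contradiction shows $\omega$ is nowhere zero; thus $\theta$ is a contact form, and $M$, now equipped with the volume form $\theta\wedge d\theta$, is orientable.

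To conclude: by Taubes' theorem \cite{taubes07}, the Reeb vector field of $\theta$ on the closed oriented $3$-manifold $M$ has a closed integral curve; since that Reeb field is $-\kk$, and the integral curves of $-\kk$ coincide, as subsets of $M$, with those of $\kk$, it follows that $\kk$ itself has a closed integral curve. I expect the main friction to lie in the Raychaudhuri step: fixing the sign conventions in \cite[Prop. 5.7.2]{o1995} so that $\text{Ric}(\kk,\kk) > 0$ is genuinely the focusing condition, and verifying carefully that the vorticity transport equation $\cd{\kk}{\omega} = -(S\omega+\omega S)$ is truly homogeneous, so that vanishing of $\omega$ at one point propagates along the entire geodesic. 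Everything downstream of that — the contact condition, the identification of the Reeb field with $-\kk$, and the appeal to Taubes — is then immediate.
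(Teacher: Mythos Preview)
Your argument is correct and follows essentially the same route as the paper: show that $g(\kk,\cdot)$ (equivalently, $-g(\kk,\cdot)$) is a contact form via the Raychaudhuri/vorticity-transport mechanism, identify the Reeb field with $\pm\kk$, and invoke Taubes. The paper outsources the Raychaudhuri step to \cite[Corollary~1]{AA14}, whereas you spell it out; your explicit observation that $\theta\wedge d\theta$ furnishes the orientation needed for \cite{taubes07} is a detail the paper leaves implicit.
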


This result is relevant because, regarding closed geodesics on compact manifolds, in fact the Lorentzian setting has seen considerably less progress than the Riemannian; in particular, the question whether a compact Lorentzian manifold contains closed geodesics (of whatever causal character) is still open for dimensions $\geq 3$.  Nevertheless, there are some well known results, which we briefly summarize here.  Perhaps the first such was \cite{tipler79}.  It was shown therein that, if a compact Lorentzian manifold has a (regular) covering with a compact Cauchy surface, then it contains a closed timelike geodesic.   In the case when the covering is not compact, more recent results by \cite{guediri02,guediri07} and \cite{sanchez06} established the existence of closed timelike geodesics by assuming instead certain conditions on the group of deck transformations.  Next, \cite{galloway84} considered free timelike homotopies and showed that when a certain stability condition was obeyed, such homotopies necessarily contain a (longest) closed timelike geodesic.  Homotopy results have also been used when a compact Lorentzian manifold contains a hypersurface-orthogonal timelike Killing vector field (i.e., when the manifold is static); in particular, \cite{sanchez06} showed that such manifolds always contain closed timelike geodesics.  This result was then strengthened in \cite{flores11}, wherein it was shown that closed timelike geodesics exist even when the Killing vector field is not hypersurface-orthogonal or everywhere timelike.  All of these results hold for dimensions $\geq 2$.  In the case of only two dimensions, stronger results can be had.  Indeed, every compact Lorentzian 2-manifold contains a closed timelike or null geodesic \cite{galloway86}; in fact, it was shown in \cite{suhr13} that there must be at least two closed geodesics.

\section{Proofs of Results}
\label{sec:null}

\begin{proof}[Proof of Theorem \ref{thm:f}]
We have, by assumption, a triple $(M,g,\kk)$ as in Theorem \ref{thm:f}, as well as a smooth function $f$ on $M$ such that $\kk(f)$ is nowhere vanishing.  That $\kk$ is null and has geodesic flow means that the following two equations are obeyed by $\kk$ (see \cite[Proposition 5.7.2, p. 330]{o1995}),
\beqa
\kk(\text{div}\,\kk) &=& \frac{\iota^2}{2} - 2|\sigma|^2 - \frac{(\text{div}\,\kk)^2}{2} - \text{Ric}(\kk,\kk),\label{eqn:ray1}\\
\kk(\iota^2) &=& -2(\text{div}\,\kk)\,\iota^2,\label{eqn:ray2}
\eeqa
where $|\sigma|^2$ is the squared magnitude of the \emph{complex shear} associated to $\kk$'s flow, while the function $\iota^2$ vanishes at a point if and only if the normal subbundle $\kk^{\perp} \subset TM$ is integrable at that point; the latter follows from Frobenius's theorem.  Note that although $\sigma$ and $\iota$ are usually defined via local frames, both $|\sigma|^2$ and $\iota^2$ are in fact globally defined smooth functions on $M$; see \cite[p. 327ff.]{o1995}, bearing in mind that our $\iota$ is twice that of \cite{o1995} (in the literature ``$\iota$" is usually designated by ``$\omega$", but we will reserve the latter symbol for denoting symplectic forms; for a discussion of the significance of the quotient subbundle $\kk^{\perp}/\kk$ in the derivation of \eqref{eqn:ray1} and \eqref{eqn:ray2}, which we emphasized in the Introduction above, see \cite[p. 327ff.]{o1995}).  With that said, the usual argument now applies.  Namely, let $\gamma$ be an arbitrary geodesic integral curve of $\kk$, and suppose that $\iota^2 \circ \gamma = 0$.  Then given $\text{Ric}(\kk,\kk) > 0$, \eqref{eqn:ray1} simplifies along $\gamma$ to the inequality
\beqa
\label{eqn:ref}
\kk(\text{div}\,\kk) \circ \gamma\ <\ -\frac{(\text{div}\,\kk \circ \gamma)^2}{2},
\eeqa
which implies, because $\gamma$ is complete, that $\text{div}\,\kk \circ \gamma > 0$.  But by the same argument, it follows that $\text{div}(-\kk) \circ \gamma > 0$, a contradiction.  Therefore $\iota^2 \circ \gamma$ cannot be identically zero, in which case \eqref{eqn:ray2} implies that $\iota^2$ is in fact nowhere zero along $\gamma$.  As $\gamma$ was arbitrary, we thus have that $\iota^2$ is nowhere vanishing on $M$, hence that $\kk^{\perp} \subset TM$ is nowhere integrable on $M$.  Now consider the closed 2-form
\beqa
\label{eqn:w}
\omega\ :=\ d\ip{e^f\kk}{\cdot},
\eeqa
where $d$ is the exterior derivative and $f$ is as above.  If this is to be a symplectic form on $M$, then it must be nondegenerate.  Let $\{\kk,\xx,\yy,\LL\}$ be a local frame, with $\xx$ and $\yy$ spacelike orthonormal vectors orthogonal to $\kk$ and $\LL$, and $\LL$ a null vector field satisfying $g(\kk,\LL) = -1$ (consult \cite[p.~321]{o1995} for more on such frames, in terms of which $\iota = \ip{\cd{\yy}{\kk}}{\xx} - \ip{\cd{\xx}{\kk}}{\yy}$).  Then a computation shows that
\beqa
\omega(\kk,\LL) &=& -e^f \kk(f),\label{eqn:deg1}\\
\omega(\xx,\yy) &=& -e^f\iota,\label{eqn:deg2}
\eeqa
both of which are nowhere zero, so that $\omega$ must be nondegenerate (in fact $\text{det}\,\omega = e^{4f}(\kk(f))^2\,\iota^2$).
Finally, noting that $\mathscr{L}_{X}\omega = d(X\, \lrcorner\, \omega)$ because $\omega$ is closed, the vector field
$$
X\ :=\ \frac{\kk}{\kk(f)}
$$
satisfies $X\, \lrcorner\, \omega = \ip{e^{f}\kk}{\cdot}$, from which $\mathscr{L}_{X}\omega = \omega$ follows.  Therefore $(M,\omega)$ is an exact symplectic 4-manifold with Liouville vector field $X$.
\end{proof}

{\bf Remark 1.} By Frobenius's theorem, $\kk$ cannot be tangent to a null hypersurface (i.e., a codimension one embedded submanifold whose induced metric is degenerate), precisely because $\iota$ is nowhere vanishing.  But any null surface $S$ to which $\kk$ is tangent is necessarily \emph{Lagrangian,} $\omega|_S = 0$, because $\omega(\kk,\xx) = 0$ for any spacelike $\xx$ orthogonal to $\kk$.
\vskip 6pt
{\bf Remark 2.}  Given the form of \eqref{eqn:ray1}, it is clear first of all that the curvature assumption $\text{Ric}(\kk,\kk) > 0$ can be weakened to requiring only that $\text{Ric}(\kk,\kk) \geq 0$ but positive at some point on \emph{each} integral curve of $\kk$.  Having said that, neither $\text{Ric}(\kk,\kk) > 0$ nor this weakened version is strictly necessary to construct symplectic forms like $\omega$ in \eqref{eqn:w}.  Indeed, we now construct such an $\omega$ on an open subset of (maximally extended) Kerr spacetime, which is a \emph{Ricci flat} 4-manifold (all properties of the Kerr metric appearing below can be found in \cite{o1995}).  First, recall that the \emph{maximally extended ``rapidly rotating" Kerr spacetime} $(M,g)$, with mass $m > 0$ and angular momentum per unit mass $a > m$, is an open subset of the 4-manifold $\RR^2 \times \mathbb{S}^2$ with global coordinates $(t,r,\vartheta,\varphi)$; the Lorentzian metric $g$ is given in these coordinates by
\beqa
g_{tt}\ =\ -1 + \frac{2mr}{\rho^2}\hspace{.2in},\hspace{.2in}g_{rr} &=& \frac{\rho^2}{\Delta}\hspace{.2in},\hspace{.2in}g_{\vartheta\vartheta}\ =\ \rho^2,\nonumber\\
g_{\varphi\varphi}\ =\ \left[r^2 +a^2+\frac{2m r a^2 \sin^2\vartheta}{\rho^2}\right]\sin^2\vartheta&,&g_{\vartheta t}\ =\ g_{t \vartheta}\ =\ -\frac{2mra\sin^2\vartheta}{\rho},\nonumber
\eeqa
all other components being zero, and with
$$
\rho^2\ :=\ r^2 + a^2\cos^2\vartheta\hspace{.2in},\hspace{.2in}\Delta\ :=\ r^2 -2mr + a^2.
$$
Here both $r$ and $t$ take values on the entire real line $\RR$, while $0 \leq \vartheta \leq \pi$ and $0 \leq \varphi < 2\pi$ are coordinates on $\mathbb{S}^2$.  The metric $g$ can be smoothly extended over both the \emph{horizon} $\Delta = 0$ and the \emph{axis} $\sin\vartheta = 0$ (in fact, analytically so), but the so-called \emph{ring singularity} $\Sigma$ defined by $\rho^2 = 0$ (i.e., the set of points defined by $r = \cos \vartheta = 0$, which is topologically $\RR \times \mathbb{S}^1$) is a genuine curvature singularity (see \cite[Corollary 2.7.7, p. 101]{o1995}); in fact, the maximally extended rapidly rotating Kerr spacetime $M$ is precisely $\RR^2 \times \mathbb{S}^2 - \Sigma$ (the extension is achieved via so-called \emph{Kerr-star coordinates} $(t^*,r,\vartheta,\varphi^*)$, whose definition can be found in \cite[p. 80ff.]{o1995}).  By choosing $a > m$, observe that $\Delta > 0$ has no real roots; we may then take as our null vector field the so-called \emph{outgoing principal null vector field}
$$
\kk\ :=\ \partial_r + \frac{r^2 +a^2}{\Delta}\,\partial_t + \frac{a}{\Delta}\,\partial_{\vartheta}.
$$
That $\kk$ is null, future-pointing, and has geodesic flow is verified in \cite[p. 79ff.]{o1995}.  The corresponding function $\iota^2$ can be computed directly (see \cite[p. 331]{o1995} for a derivation), and is given by
\beqa
\label{eqn:pcv2}
\iota^2\ =\ \frac{4a^2\cos^2\vartheta}{\rho^4}\cdot
\eeqa
Taking $f$ to be our coordinate function $r$ (so that, in particular, $\kk(r) = 1$) and restricting to the ``northern hemisphere" of $M$, i.e., the open subset
$$
\uu\ :=\ \{(t,r,\vartheta,\varphi) \in \RR^2 \times \mathbb{S}^2 - \Sigma~:~0 \leq \vartheta < \pi/2\},
$$
wherein $\rho^2$ and $\cos^2\vartheta$ in \eqref{eqn:pcv2} are both nowhere vanishing, it follows from inspection of \eqref{eqn:deg1} and \eqref{eqn:deg2} in Theorem \ref{thm:f} that the exact 2-form
$$
\omega\ :=\ dg(e^r\kk,\cdot)
$$
is a symplectic form on $\uu \subset M$, and that $\kk|_{\uu}$ is a Liouville vector field of $(\uu,\omega|_{\uu})$.  In fact $\kk|_{\uu}$ is \emph{complete,} as follows.  Observe that for any $p \in \uu$, the integral curve $\gamma$ of $\kk$ starting at $\gamma(0) = p$ never leaves $\uu$, for if it did then $\iota^2 \circ \gamma$ would have to vanish at $\vartheta = \pi/2$, contradicting \eqref{eqn:ray2}, since $(\iota^2 \circ \gamma)(0) \neq 0$.  Then, because any geodesic in $M$ that does not ``hit" the ring singularity $\Sigma$ is complete, and because the only integral curves of $\kk$ that hit the ring singularity are those that lie on the equatorial plane $\vartheta = \pi/2$ (see \cite[Definition 2.7.6, p. 101 \& Theorem 4.3.1, p. 189]{o1995}), it follows that $\kk|_{\uu}$ has complete flow in $\uu$.
Note that, e.g., $dg(e^t\kk,\cdot)$ is also a symplectic form on $\uu$, but $\kk|_\uu$ is not a Liouville vector field for it.  Note also that we could just as well have worked on the ``southern hemisphere" of $M$, with $\pi/2 < \vartheta \leq \pi$.
In any event, we conclude that $(\uu,\omega|_{\uu})$ is a symplectic 4-manifold for which $\kk|_{\uu}$ is a (complete) Liouville vector field.  Observe that all the conditions for Theorem \ref{thm:f} hold for $(\uu,g|_{\uu},\kk|_{\uu},r|_{\uu})$, except for $\text{Ric}(\kk|_{\uu},\kk|_{\uu}) > 0$, or its weakened version above.
\vskip 6pt
{\bf Remark 3.} Having said that, we now show that the completeness of $\kk$ cannot be dropped from among the assumptions of Theorem \ref{thm:f}.  Indeed, consider the Lorentzian metric $\tilde{g} := e^{2r}g$, with $g$ the Kerr metric defined above and $M = \RR^2 \times \mathbb{S}^2 - \Sigma$, and set
$$
\KK\ :=\ e^{-2r}\kk.
$$
In the Lorentzian 4-manifold $(M,\tilde{g})$, it is straightforward to verify that the null vector field $\KK$ satisfies
$$
\widetilde{\nabla}_{\!\KK}\KK\ =\ 0\hspace{.2in},\hspace{.2in}\text{Ric}_{\tilde{g}}(\KK,\KK)\ =\ 2(dr(\KK))^2\ =\ 2e^{-4r}\ > \ 0,
$$ 
where $\widetilde{\nabla}$ is the Levi-Civita connection and $\text{Ric}_{\tilde{g}}$ the Ricci tensor of $(M,\tilde{g})$ (see, e.g., \cite[p. 59]{besse}).  However, any integral curve $\tilde{\gamma}$ of $\KK$ that lies on the equatorial plane $\vartheta = \pi/2$ cannot be complete; indeed, if any such $\tilde{\gamma}$ were complete, then precisely the same analysis of \eqref{eqn:ray1} and \eqref{eqn:ray2} in Theorem \ref{thm:f} would dictate that the normal subbundle $\KK^{\perp} \subset TM$ cannot be integrable along $\tilde{\gamma}$.  But $\KK^{\perp} = \kk^{\perp}$ and, by \eqref{eqn:pcv2}, the latter \emph{is} integrable on the equatorial plane.  We conclude that $(M,\tilde{g},\KK,r)$ is an example for which all the conditions for Theorem \ref{thm:f} hold except for the completeness of $\KK$, but for which the conclusions of Theorem \ref{thm:f} do not hold (for any $f$).
\vskip 6pt
Speaking more generally, suppose that the null vector field $\kk$ in Theorem \ref{thm:f} is not complete; then one may pick a complete Riemannian metric $g_R$ on $M$ and work with the complete vector field $\KK := \kk/|\kk|_{g_R}$ instead (observe that with respect to our original Lorentzian metric $g$, $\KK$ is null and satisfies $\text{Ric}(\tilde{\kk},\tilde{\kk}) > 0$).  The difficulty now is that $\KK$ in general has only \emph{pregeodesic} flow; i.e., setting $h := 1/|\kk|_{g_R}$, we have
$
\cd{\KK}{\KK} = \kk(h)\,\KK.
$
Now set $\psi := \text{div}\,\kk - \kk(h) $.  The analogues of \eqref{eqn:ray1} and \eqref{eqn:ray2} for $\KK$ are (for a derivation, which we forego here, modify the derivations of \eqref{eqn:ray1} and \eqref{eqn:ray2} in \cite[Prop.~5.8.9,~p.~339]{o1995}, noting that while $\kappa = 0$ as before, now $\vep + \bar{\vep} = \kk(h)$ and $2\rho = -\psi + i\,\tilde{\iota}$ therein)
\beqa
\KK(\psi) &=& \frac{\tilde{\iota}^{\,2}}{2} - 2|\tilde{\sigma}|^2 - \frac{\psi^2}{2} + \kk(h)\,\psi - \text{Ric}(\tilde{\kk},\tilde{\kk}),\label{eqn:ray3}\\
\KK(\tilde{\iota}^{\,2}) &=& -2(\psi - \kk(h))\,\tilde{\iota}^{\,2},\label{eqn:ray4}\nonumber
\eeqa
and where we have introduced tildes to distinguish between the corresponding functions for $\kk$ in \eqref{eqn:ray1} and \eqref{eqn:ray2} above (in fact $\tilde{\iota}^{\,2} = h^2\,\iota^2$ and $|\tilde{\sigma}|^2 = h^2\,|\tilde{\sigma}|^2$).
Unfortunately, \eqref{eqn:ray3} does not permit the same analysis as \eqref{eqn:ray1} afforded, unless the bound on the Ricci term is modified appropriately, e.g., by stipulating that $\text{Ric}(\kk,\kk) > \kk(h)\,(\text{div}\,\kk)$.

\vskip 6pt
{\bf Remark 4.} Let us make one more remark about completeness.  Let $\kk$ be a geodesic null vector field in a Lorentzian 4-manifold $(M,g)$.  If $(M,g)$ is a stably causal spacetime, which means that it is equipped with a smooth function $f$ on $M$ whose gradient is everywhere past-pointing timelike, then by \cite{beem} $(M,g)$ is conformal to a null geodesically complete Lorentzian 4-manifold $(M,\tilde{g})$, with $\tilde{g} = e^{2u}g$ for some smooth function $u$ on $M$.  It follows that $\KK := e^{-2u}\kk$ is a geodesically complete null vector field in $(M,\tilde{g})$.  In other words, in order to use Theorem \ref{thm:f} to construct a symplectic form on a smooth 4-manifold $M$ on which there exist a stably causal Lorentzian metric $g$ and a geodesic null vector field $\kk$, in principle only the curvature condition $\text{Ric}_{\tilde{g}}(\KK,\KK) > 0$ need be verified (for the properties of null vector fields under conformal transformations, consult \cite{candela}).
\vskip 6pt
{\bf Remark 5.}  Finally, we construct a null vector field on the Lorentzian 4-manifold $(\RR \times \mathbb{S}^3,-dt^2 \oplus \mathring{g})$ satisfying the conditions of Theorem \ref{thm:f}, where $(\mathbb{S}^3,\mathring{g})$ is the (Riemannian) round 3-sphere.  In coordinates $(x^1,y^1,x^2,y^2) \in \RR^4$, define the vector field
$$
\kk\ :=\ \sum_{i}-y^i \frac{\partial}{\partial x^i} + x^i \frac{\partial}{\partial y^i}\cdot
$$
In the usual local parametrization of, say, the upper hemisphere $(y^2 > 0)$ of $\mathbb{S}^3$, and assuming that the sphere has radius 1,
$$
(x^1,y^1,x^2) \inc \left(\,x^1,y^1,x^2,\sqrt{1-(x^1)^2-(y^1)^2-(x^2)^2}\,\right),
$$
the round metric $\mathring{g}$ is of the form
$$
\mathring{g}\ =\ (dx^1)^2 + (dy^1)^2 + (dx^2)^2 + \left(\frac{x^1 dx^1 + y^1 dy^1 + x^2 dx^2}{\sqrt{1-(x^1)^2-(y^1)^2-(x^2)^2}}\right)^{\!2},
$$
while $\kk$, which is tangent to $\mathbb{S}^3$, takes the form
\beqa
\label{eqn:K}
\kk\ =\ -y^1\frac{\partial}{\partial x^1} + x^1\frac{\partial}{\partial y^1} - \sqrt{1-(x^1)^2-(y^1)^2-(x^2)^2}\,\frac{\partial}{\partial x^2}\cdot
\eeqa
It is straightforward to verify that $\kk$ is a unit length Killing vector field in $(\mathbb{S}^3,\mathring{g})$, so that the flow of $\kk$ is, among other things, geodesic (being also divergence-free, it follows by \cite{gluck} that $\kk$ is tangent to the Hopf fibration).  Furthermore, $\text{Ric}_{\mathring{g}}(\kk,\kk) = 2$.  Consider now the Lorentzian metric $\tilde{g}$ on $\mathbb{S}^3$ defined by
\beqa
\label{eqn:lorentz}
\tilde{g}\ :=\ \mathring{g} - 2\mathring{g}(\kk,\cdot) \otimes \mathring{g}(\kk,\cdot),
\eeqa
with corresponding Levi-Civita connection $\widetilde{\nabla}$.  In $(\mathbb{S}^3,\tilde{g})$, $\kk$ is unit timelike, $\tilde{g}(\kk,\kk) = -1$, and in fact still a Killing vector field.  Now we construct a null vector field on the Lorentzian 4-manifold $(\RR \times \mathbb{S}^3,-dt^2 \oplus \mathring{g})$ satisfying the conditions of Theorem \ref{thm:f}.  In fact the desired null vector field is simply $\tilde{\kk} := d/dt + \kk$.  Setting $\tilde{g} := -dt^2 \oplus \mathring{g}$ and, by abuse of notation, denoting by $\widetilde{\nabla}$ the corresponding Levi-Civita connection, it follows that $\widetilde{\nabla}_{\!\tilde{\kk}}\tilde{\kk} = 0$.  Furthermore, $\tilde{\kk}$ is complete and satisfies $\text{Ric}_{\tilde{g}}(\tilde{\kk},\tilde{\kk}) = \text{Ric}_{\mathring{g}}(\kk,\kk) > 0$.  Taking as our function $f$ the projection $t$ of any point onto its $t$-coordinate, it follows that $d\tilde{g}(e^t\tilde{\kk},\cdot)$ is an exact symplectic form on $(\RR \times \mathbb{S}^3,\tilde{g})$.  Note that this is a symplectization of the contact form $\mathring{g}(\kk,\cdot)$ on $\mathbb{S}^3$.

\vskip 12pt
Finally, we close with a proof of Proposition \ref{prop:contact}:

\begin{proof}[Proof of Proposition \ref{prop:contact}]
Since $M$ is compact, the flow is complete and $\text{Ric}(\kk,\kk) \geq b$ for some positive constant $b$.  The proof now proceeds virtually identically to \cite[Corollary~1]{AA14}.  We thus have that $g(\kk,\cdot)$ is a contact form, hence so is $-g(\kk,\cdot)$, and $\kk$ will be the Reeb vector field of the latter (if $\kk$ did not have constant length, then $-g(\kk,\cdot)$ would still be a contact form by the same proof as in \cite[Corollary~1]{AA14}, but $\kk$ would not be its Reeb vector field).  In three dimensions, the Weinstein conjecture \cite{taubes07} states that every Reeb vector field on $M$ has an integral curve that is closed, so the proof is complete.
\end{proof}

{\bf Remark 6.}  On $(\mathbb{S}^3,\tilde{g})$, with Lorentzian metric $\tilde{g}$ given by \eqref{eqn:lorentz}, the Killing vector field $\kk$ given by \eqref{eqn:K} necessarily has a closed integral curve, by Proposition \ref{prop:contact}.  Having said that, a stronger result is known whenever a compact Lorentzian manifold possesses a timelike Killing vector field: there necessarily exists another timelike Killing vector field \emph{all} of whose integral curves are closed; see \cite{rsanchez}.  Therefore, although the triple $(\mathbb{S}^3,\tilde{g},\kk)$ in Remark 5 above is sufficient to illustrate Proposition \ref{prop:contact}, nevertheless the latter is better served in cases when $\kk$ is not a timelike Killing vector field.

%%%%%%%%%%%%%%
\section*{Acknowledgements}
This work was supported by the World Premier International Research Center Initiative (WPI), MEXT, Japan.  The author thanks Jeffrey L. Juaregui for reading an earlier version of this paper, Denis Auroux, Graham Cox, Alexandru Oancea, and Miguel S\'anchez for helpful discussions, and the referee for numerous helpful comments, in particular for Remarks 4 and 6.

\bibliographystyle{siam}
\bibliography{contact-symplectic-FINAL}
\end{document}